\documentclass[11pt,reqno]{amsart}
\usepackage{amsmath,amssymb,amsthm,amsfonts,enumerate,hyperref}
\pagestyle{plain}
\addtolength{\footskip}{0.5cm}
\setlength{\parindent}{0pt}
\setlength{\marginparwidth}{0.5cm}
\addtolength{\textwidth}{2cm}
\setlength{\hoffset}{-0.75cm}
\setlength{\voffset}{-2cm}
\addtolength{\textheight}{2cm}

\theoremstyle{plain}
\newtheorem{lemma}{Lemma}[section]
\newtheorem{theorem}{Theorem}[section]
\newtheorem{proposition}[lemma]{Proposition}

\theoremstyle{definition}

\theoremstyle{remark}

\numberwithin{equation}{section}

\newcommand{\Riem}{\mathrm{Rm}}

\begin{document}

\title{Totally umbilical surfaces in three-manifolds with a parallel null vector field}
\author{Wafaa Batat}
\address{Ecole Nationale Polytechnique d'Oran,
	B.P 1523 El M'naouar, 31000 Oran, Algeria}
\email{batatwafa@yahoo.fr}
\author{Stuart James Hall}
\address{School of Mathematics and Statistics, Herschel Building, Newcastle University, Newcastle upon Tyne, NE1 7RU, UK} 
\email{stuart.hall@ncl.ac.uk}

\maketitle
\begin{abstract}
We study non-degenerate, totally umbilical surfaces of  a special class of pseudo-Riemannian manifolds, namely Walker three-manifolds. We show that such surfaces are either one of a totally geodesic family described by Calvaruso and Van der Veken or the ambient manifold must be locally conformally flat (here the surface can also be totally geodesic). The proof makes use of a key technique deployed by Manzano and Soaum in their recent classification of totally umbilical surfaces in homogeneous Riemannian three-manifolds. 
\end{abstract}
\section{Introduction}
\subsection{Background}

One of the most natural problems in submanifold theory is to find special embeddings of a manifold $M$ into an ambient Riemannian or pseudo-Riemannian manifold $(N,g)$. The conditions that make an embedding special often involve the second fundamental form, a symmetric $(0,2)$-tensor field on $M$. For example, if the second fundamental form vanishes then the embedding is said to be totally geodesic (so geodesics on the submanifold remain as geodesics in the ambient space).  There are a number of ways to generalise this condition; one way would be to look for submanifolds with parallel second fundamental form, yielding the notion of a parallel submanifold. Another, the generalisation we take up in this article, is to require that the traceless component of the second fundamental form vanishes (i.e. the second fundamental form is a multiple of the induced metric on the submanifold $M$); these submanifolds are known as \textit{totally umbilical}.\\
\\
The system of PDEs that correspond to a totally umbilical embedding are difficult to solve in general and so classifiying such submanifolds, even within a fixed ambient manifold $(N,g)$, is often impossible. However, in the case when the ambient manifold has a very explicit and well-understood geometry, there has been more progress. In three dimensions, when the ambient space is a homogeneous Riemannian manifold, there is now a complete classification of totally umbilical surfaces due to recent work of Souam and Toubiana \cite{ST}, and Manzano and Souam \cite{MS}.\\
\\
In the pseudo-Riemannian setting such classification problems appear to be harder, even in the low-dimensional setting. This is in part because of the greater variety of behaviour that pseudo-Riemannian metrics may exhibit. We refer the reader to the article \cite{P} for an introduction to the theory of totally umbilical submanifolds of Lorentzian manifolds. As in the Riemannian case, it is sensible to proceed by studying the submanifold geometry of ambient manifolds of a fixed geometric type.\\
\\
A particularly interesting class of pseudo-Riemannian manifold are ones which admit a parallel null vector field. The study of such metrics in the three-dimensional Lorentzian setting was initiated in \cite{CGRVA}. We will refer to these manifolds as \textit{Walker three-manifolds}.  A useful feature of such metrics is the existence of a local coordinate system $(t,x,y)$ in which the parallel null vector field is $\frac{\partial}{\partial t}$ and the metric takes the form 
\begin{equation}\label{MET}
g_{f}^{\varepsilon} = \varepsilon dx^{2}+f(x,y)dy^{2}+2dtdy,
\end{equation}
where $\varepsilon = \pm 1$ and $f(x,y)$ is a smooth function. The difficulty in providing a classification of special submanifolds for these metrics is that the function $f$ can be essentially arbitrary and so we have to deal with an infinite and highly non-trivial set of ambient geometries. In \cite{CV}, Calvaruso and Van der Veken classified parallel surfaces in these manifolds. A totally umbilical surface where the second fundamental form is a constant multiple of the induced metric would also be parallel. 
\subsection{Results}
In this paper we marry the techniques of Mazano--Toubiana--Souam with the those of Calvaruso--Van der Veken and prove the following result which demonstrates that totally umbilical surfaces can only exist in a  restricted set of ambient Walker manifolds. This is somewhat analogous to the results in the Riemannian case where only certain homogenous three-manifolds admit any totally umbilical surfaces.
\begin{theorem} \label{T1}
Let $(N,g_{f}^{\varepsilon})$ be a Walker three-manifold with metric $g_{f}^{\varepsilon}$ given by (\ref{MET}). Assume that there is no open subset of $N$ where $f_{xx}=0$. Then, if $\Sigma\subset N$ is a totally umbilical surface, one of the following is true:
\begin{itemize}
\item $\Sigma$ is one of the parallel surfaces described in Theorem 2 of \cite{CV}.
\item The ambient manifold $(N,g_{f}^{\varepsilon})$ is locally conformally flat.
\end{itemize}
\end{theorem}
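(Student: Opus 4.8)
\emph{Outline of the argument.} The strategy is to transplant the method of Manzano and Souam \cite{MS} --- that for a totally umbilical hypersurface the gradient of the umbilicity function and the associated angle function are governed by the ambient Ricci curvature and by a parallel (Killing) field --- into the explicit coordinate description used by Calvaruso and Van der Veken. First one records the ambient geometry. A direct computation with the Levi-Civita connection $\nabla$ of $g_{f}^{\varepsilon}$ shows that, up to the symmetries of the curvature tensor, the only nonvanishing components are $R(\partial_{x},\partial_{y})\partial_{x}=\tfrac12 f_{xx}\,\partial_{t}$ and $R(\partial_{x},\partial_{y})\partial_{y}=-\tfrac12\varepsilon f_{xx}\,\partial_{x}$ (anything contracted against the parallel field $\partial_{t}$ vanishes), so that $\Ric=-\tfrac12\varepsilon f_{xx}\,dy\otimes dy$ and the scalar curvature vanishes. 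Since a pseudo-Riemannian three-manifold is locally conformally flat exactly when its Cotton tensor vanishes, and with a Ricci tensor of this shape the Cotton tensor reduces to a multiple of $\partial_{x}(f_{xx})$, the metric $g_{f}^{\varepsilon}$ is locally conformally flat precisely on the region where $f_{xxx}=0$. One also notes the identity $g_{f}^{\varepsilon}(\partial_{t},\cdot)=dy$.

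Now let $\Sigma$ be a non-degenerate totally umbilical surface with unit normal $\nu$ and umbilicity function $\lambda$, so that the scalar second fundamental form is $\lambda$ times the induced metric and $\nabla_{X}\nu=-\lambda X$ for $X$ tangent to $\Sigma$. If $\lambda\equiv0$ then $\Sigma$ is totally geodesic, hence parallel, hence one of the surfaces listed in Theorem~2 of \cite{CV}, and the first alternative holds. Otherwise consider the nonempty open set $\{\lambda\neq0\}\subset\Sigma$. On it, $\partial_{t}$ must be transverse to $\Sigma$: were $\partial_{t}$ tangent at a point $p$, then, $\partial_{t}$ being parallel, the Gauss formula would give $\mathrm{II}(X,\partial_{t})=0$ for all $X$, so $\lambda(p)\,g_{f}^{\varepsilon}(\partial_{t},\cdot)$ would vanish on $T_{p}\Sigma$, whence $\lambda(p)=0$ by non-degeneracy of the induced metric. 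Therefore, near each point of $\{\lambda\neq0\}$ the surface is a graph $t=\phi(x,y)$ over an open set $W$ of the $(x,y)$-plane, with tangent frame $E_{x}=\partial_{x}+\phi_{x}\partial_{t}$, $E_{y}=\partial_{y}+\phi_{y}\partial_{t}$ and with $\nu$ a nonzero multiple of $-(f+\phi_{y})\partial_{t}-\varepsilon\phi_{x}\partial_{x}+\partial_{y}$; setting $w:=g_{f}^{\varepsilon}(\partial_{t},\nu)$, transversality is exactly the statement that $w$ is nowhere zero, and by the identity above $w$ is the $\partial_{y}$-component of $\nu$.

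The crux is then a short use of the structure equations. Substituting the curvature formulas gives $R(E_{x},E_{y})\nu=-\tfrac12\varepsilon f_{xx}\,w\,E_{x}$, which is \emph{already tangent to} $\Sigma$; hence the Codazzi equation for a totally umbilical surface splits into $E_{x}\lambda=0$ and $E_{y}\lambda=c\,f_{xx}\,w$ with $c=\pm\tfrac12\varepsilon\neq0$. In particular $\lambda$ depends only on $y$, and the second relation reads $\lambda'(y)=c\,f_{xx}(x,y)\,w(y)$, with the left-hand side independent of $x$. On the other hand, since $\partial_{t}$ is parallel and $\nabla_{X}\nu=-\lambda X$, the angle function satisfies $Xw=g_{f}^{\varepsilon}(\partial_{t},\nabla_{X}\nu)=-\lambda\,g_{f}^{\varepsilon}(\partial_{t},X)=-\lambda\,dy(X)$, so $dw=-\lambda\,dy|_{\Sigma}$ and $w$, too, depends only on $y$. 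Since $w$ is nowhere zero, $\lambda'(y)=c\,f_{xx}(x,y)\,w(y)$ forces $f_{xx}$ to be independent of $x$ on $W$, i.e. $f_{xxx}=0$ on $W$. As $f$ does not depend on $t$, this yields $f_{xxx}\equiv0$ on the open set $W\times\RR\subset N$, so $g_{f}^{\varepsilon}$ is locally conformally flat there --- the second alternative. (The standing hypothesis that $f_{xx}$ vanishes on no open set is used to exclude the degenerate possibility that $\lambda$ is a nonzero constant, which by $\lambda'(y)=c\,f_{xx}(x,y)\,w(y)$ would force $f_{xx}\equiv0$ on $W$, and so to keep the second alternative a genuine restriction.)

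The step I expect to require the most care is not any single calculation but the global bookkeeping: tracking causal characters throughout (the sign $\varepsilon$ and whether $T\Sigma$ and $\nu$ are spacelike or timelike), handling a surface that is totally geodesic on a closed set and genuinely umbilical on the complementary open set --- reconciling the two descriptions along their common boundary --- and verifying that the totally geodesic/parallel output coincides precisely with the list in Theorem~2 of \cite{CV}. The Gauss equation, which in these coordinates expresses the intrinsic curvature of the graph metric through $f_{xx}$ and $\lambda^{2}$, provides a convenient supplementary identity for clearing the remaining borderline cases.
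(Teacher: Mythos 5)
Your proposal is correct, and it reaches Theorem~\ref{T1} by a genuinely different route from the paper. The paper works entirely in the orthonormal frame $e_{1},e_{2},e_{3}$, writes the normal as $\mathcal{V}=v_{1}e_{1}+v_{2}e_{2}+v_{3}e_{3}$, and extracts its key identity $v_{3}(v_{2}-v_{3})^{2}f_{xxx}=0$ by computing $[e_{1}^{T},e_{2}^{T}](\lambda)$ in two ways and subtracting, after which the factors $v_{3}=0$ and $v_{2}=v_{3}$ are eliminated by separate arguments (the first forces $f_{x}=0$, the second forces $\lambda$ constant and hence a parallel surface from the Calvaruso--Van der Veken list). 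You instead exploit the parallel null field $\partial_{t}$ directly: your observation that $\partial_{t}$ tangent at $p$ forces $\lambda(p)=0$ (since $h(\cdot,\partial_{t})=0$ and the induced metric is non-degenerate) subsumes the paper's entire $v_{2}=v_{3}$ case, and on $\{\lambda\neq0\}$ the graph representation $t=\phi(x,y)$, the Codazzi identity $R(E_{x},E_{y})\nu=(E_{y}\lambda)E_{x}-(E_{x}\lambda)E_{y}$, and the angle-function identity $dw=-\lambda\,dy$ combine to give $\lambda'(y)=c\,f_{xx}(x,y)\,w(y)$ with $w=w(y)$ nowhere zero, hence $f_{xxx}=0$ at once. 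I checked your curvature components against the paper's frame formulas (\ref{Rm1})--(\ref{Rm2}) and they agree; note that your conclusion is in fact slightly sharper than the paper's Proposition, since $w=\tfrac{1}{\sqrt{2}}(v_{2}-v_{3})$, so you obtain $f_{xxx}=0$ wherever $v_{2}\neq v_{3}$ without the extraneous factor of $v_{3}$, whereas the paper must dispose of the $v_{3}=0$ case by a separate flatness argument. Your method is, if anything, closer in spirit to the Manzano--Souam angle-function technique than the paper's Lie-bracket computation, and it avoids the ``lengthy simplification'' the paper alludes to. Two caveats, both of which you flag and both of which the paper shares: the passage from ``$f_{xxx}=0$ on the open set over which the non-geodesic part of $\Sigma$ graphs'' to ``the ambient manifold is locally conformally flat'' is local rather than global; and the claim that the Cotton tensor reduces to a multiple of $f_{xxx}$ should either be verified in all components or simply cited from \cite{CGRVA}, as the paper does. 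Neither affects the substance of the argument.
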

In the case where $(N,g_{f}^{\varepsilon})$ is locally conformally flat, we do not yet know of any non-parallel example of a totally umbilical surface. Constructing and further characterising such surfaces are natural steps that we hope to take up in future work.  The techniques of Manzano, Souam and Toubiana were originally used to study homogenous Riemannian metrics in three-dimensions.  It is also possible to use these ideas to classify certain homogeneous pseudo-Riemannian geometries (as mentioned previously there are many more geometries in the pseudo-Riemannian case).  This work will appear elsewhere \cite{BH17}. \\
\\
\textit{Acknowledgements:} The work for this paper was conducted in December 2016 when WB was visiting SH at the University of Newcastle. This research visit was funded by both the University of Newcastle and the Ecole Nationale Polytechnique d'Oran. We would like to thank both these institutions for their support. We would also like to thank Thomas Murphy for his 
interest, friendship and continued support. 

\section{The geometry of Lorentzian Walker three-manifolds}
We are interested in three-dimesional, pseudo-Riemannian manifolds $(N,g_{f}^{\varepsilon})$ where the metric is given by Equation (\ref{MET}).
Here $N$ can be taken to be an open subset of $\mathbb{R}^{3}$ and $\varepsilon=\pm1$. Following Calvaruso--Van der Veken, it is convenient to encode the metric via an orthonormal basis
$$e_{1} = \partial_{x}, \qquad e_{2} = \frac{2-f}{2\sqrt{2}}\partial_{t}+\frac{1}{\sqrt{2}}\partial_{y}, \qquad e_{3} = \frac{2+f}{2\sqrt{2}}\partial_{t}-\frac{1}{\sqrt{2}}\partial_{y}.$$
In this case we have
$$\langle e_{1},e_{1}\rangle = \varepsilon, \qquad \langle e_{2},e_{2}\rangle =1 \quad \textrm{ and } \quad \langle e_{3},e_{3}\rangle=-1. $$
We shall use $\langle \cdot, \cdot\rangle$ to denote both the ambient Walker metric $g_{f}^{\varepsilon}$ as well as the induced metric on the surface $\Sigma \subset N$. Let $\overline{\nabla}$ and $\nabla$ be the Levi-Civita connections of $g_{f}^{\varepsilon}$ and  the induced metric on $\Sigma$ respectively. The non-zero components of the Levi-Civita connection of $g_{f}^{\varepsilon}$ are given by
\begin{equation}\label{LCcon1}
\overline{\nabla}_{e_{2}}e_{1}=-\overline{\nabla}_{e_{3}}e_{1}=\frac{1}{4}f_{x}(e_{2}+e_{3}), 
\end{equation} 
and
\begin{equation}\label{LCcon2}
\overline{\nabla}_{e_{2}}e_{2}=-\overline{\nabla}_{e_{3}}e_{2}=-\overline{\nabla}_{e_{2}}e_{3}=\overline{\nabla}_{e_{3}}e_{3}=-\frac{\varepsilon}{4}f_{x}e_{1}.
\end{equation}
The convention we use for the curvature is
$$\overline{\Riem}(X,Y)Z = [\overline{\nabla}_{X},\overline{\nabla}_{Y}]Z-\overline{\nabla}_{[X,Y]}Z.$$
The curvature tensor of $g_{f}^{\varepsilon}$ given by
\begin{equation} \label{Rm1}
\overline{\Riem}(e_{1},e_{2})e_{1}=-\overline{\Riem}(e_{1},e_{3})e_{1} = \frac{1}{4}f_{xx}(e_{2}+e_{3}),
\end{equation}
\begin{equation}\label{Rm2}
\overline{\Riem}(e_{1},e_{2})e_{2}=-\overline{\Riem}(e_{1},e_{2})e_{3} = -\overline{\Riem}(e_{1},e_{3})e_{2}=\overline{\Riem}(e_{1},e_{3})e_{3} = -\frac{\varepsilon}{4}f_{xx}e_{1}.
\end{equation}
We will consider non-dengenerate surfaces $\Sigma\subset N$ (i.e. the induced metric is Riemannian or Lorentzian) with a $\delta$-normal vector $\mathcal{V}$ where
$$\mathcal{V} = v_{1}e_{1}+v_{2}e_{2}+v_{3}e_{3} \textrm{ with } \varepsilon v_{1}^{2}+v_{2}^{2}-v_{3}^{2}=\delta=\pm 1,$$
for some $v_{i} \in C^{\infty}(\Sigma)$.  \\
\\
Given two tangent vectors to $\Sigma$, $X$ and $Y$ then
$$\overline{\nabla}_{X}Y = \nabla_{X}{Y} +h(X,Y)\mathcal{V},$$
where $h(X,Y)$ is the \textit{second fundamental form}. The surface $\Sigma$ is said to be \textit{totally umbilical} if 
\begin{equation}\label{TUeqh}
h(\cdot,\cdot)=\delta\lambda \langle \cdot,\cdot\rangle,
\end{equation}
for a function $\lambda \in C^{\infty}(\Sigma)$. This condition can also be phrased in terms of the shape operator $S$. The surface is totally umbilical if
\begin{equation}\label{TUeq}
S(T):=-\overline{\nabla}_{T}\mathcal{V}=\lambda T,
\end{equation}
for any $T \in T\Sigma$. We note that Equations (\ref{TUeqh}) and (\ref{TUeq}) are related by
$$\langle S(X),Y\rangle = \delta h(X,Y),$$
for all $X,Y, \in T\Sigma$.  
\section{The proof of Theorem A}
The proof of Theorem \ref{T1} uses one of the key ideas from both \cite{MS} and \cite{ST}.  This is to compute the derivative of the function $\lambda$ in the direction of the Lie bracket $[e_{1}^{T},e_{2}^{T}]$ where $e_{i}^{T}$ is the projection of the standard frame to the tangent bundle $T\Sigma$.  There are two different ways of doing this, one is to use the Levi-Civita connection of the metric and compute, 
$$\langle \nabla_{e^{T}_{1}}e^{T}_{2}-\nabla_{e^{T}_{2}}e^{T}_{1},\nabla \lambda\rangle.$$
The second way is to compute directly, 
 $$e^{T}_{1}(e^{T}_{2}(\lambda))-e^{T}_{2}(e^{T}_{1}(\lambda)).$$
Of course the result of these two calculations should be identical and requiring this gives something akin to integrability conditions for the PDEs involving $\lambda$ and the angle functions $v_{i}$.\\
\\
As in \cite{CV}, we consider two cases: where the function $v_{1}$ in the vector $\mathcal{V}$ has open neighbourhoods in which it does not and does vanish (of course the former condition always determines an open subset of $\Sigma$). In the case that $v_{1}=0$, this means that the frame $e_{1}$ is always tangent to the surface $\Sigma$. However, the Equations (\ref{LCcon1}) and (\ref{LCcon2}) for the Levi-Civita connection show that
$\overline{\nabla}_{e_{1}}\mathcal{V}$ does not have any component in $e_{1}$. Thus $\lambda=0$ and the surface would be totally geodesic and therefore parallel. Hence we can refer to the results of Calvaruso--Van der Veken in this case who show that no such surfaces can exist. We will henceforth assume that $v_{1}\neq 0$.

\subsection{Computing $\nabla \lambda$}
In what follows it will be useful to compute the derivative of the function $\lambda$ coming from the totally umbilical condition (\ref{TUeq}) in terms the function $f$ appearing in the metric (\ref{MET}) and the functions $v_{i}$ appearing in the description of the $\delta$-normal vector $\mathcal{V}$.\\
\\
Following Calvaruso--Van der Veken, we define two vectors tangent to $\Sigma$
$$T_{1} = v_{1}e_{2}-\varepsilon v_{2}e_{1} \textrm{ and } T_{2} = v_{1}e_{3}+\varepsilon v_{3}e_{1}.$$ 
In other words,the matrix 
$$M = \left( \begin{array}{ccc} -\varepsilon v_{2} & \varepsilon v_{3} & v_{1} \\ v_{1} & 0 & v_{2} \\ 0 & v_{1} & v_{3} \end{array}\right), $$
yields the basis  $\{T_{1},T_{2},\mathcal{V}\}$ in terms of the vectors $e_{i}$.  
The following lemma relates the derivative of the function $\lambda$ to the curvature of the metric $g_{f}^{\varepsilon}$.
\begin{lemma}
Let $\Sigma \subset N$ be a totally umbilical surface and let $\lambda, T_{1},T_{2},$ and $\mathcal{V}$ be as previously, then	
$$\overline{\Riem}(T_{1},T_{2})\mathcal{V} = \langle \nabla \lambda, T_{2}\rangle T_{1}-\langle \nabla \lambda, T_{1}\rangle T_{2}.$$
\end{lemma}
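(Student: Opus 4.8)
The plan is to obtain this identity as the Codazzi equation of $\Sigma$ specialised to the totally umbilical case, computing $\overline{\Riem}(T_1,T_2)\mathcal{V}$ directly from the definition of the curvature tensor using only the Gauss and Weingarten formulae together with the umbilicity condition (\ref{TUeq}). Since both sides of the claimed formula are tensorial in $T_1$ and $T_2$, I may replace $T_1,T_2$ by arbitrary local extensions to vector fields tangent to $\Sigma$; then $[T_1,T_2]$ is again tangent to $\Sigma$, which is the one structural fact the argument really needs.

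First I would record that $\overline{\nabla}_T\mathcal{V}=-S(T)=-\lambda T$ for every $T$ tangent to $\Sigma$: this is just (\ref{TUeq}), and the fact that $\overline{\nabla}_T\mathcal{V}$ has no $\mathcal{V}$-component is automatic from $\langle\mathcal{V},\mathcal{V}\rangle=\delta$ being constant, since differentiating gives $\langle\overline{\nabla}_T\mathcal{V},\mathcal{V}\rangle=0$ and the normal bundle is spanned by $\mathcal{V}$. Next I would expand
$$\overline{\Riem}(T_1,T_2)\mathcal{V}=\overline{\nabla}_{T_1}\overline{\nabla}_{T_2}\mathcal{V}-\overline{\nabla}_{T_2}\overline{\nabla}_{T_1}\mathcal{V}-\overline{\nabla}_{[T_1,T_2]}\mathcal{V},$$
substitute $\overline{\nabla}_{T_2}\mathcal{V}=-\lambda T_2$ into the first term, apply the Leibniz rule and then the Gauss formula $\overline{\nabla}_{T_1}T_2=\nabla_{T_1}T_2+h(T_1,T_2)\mathcal{V}$, and do the same for the second term with the roles of $T_1,T_2$ exchanged. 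The third term is simply $-\lambda[T_1,T_2]$, because $[T_1,T_2]\in T\Sigma$ and (\ref{TUeq}) applies to it.

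Collecting terms, the two contributions $h(T_1,T_2)\mathcal{V}$ and $h(T_2,T_1)\mathcal{V}$ cancel by symmetry of the second fundamental form, the combination $\nabla_{T_1}T_2-\nabla_{T_2}T_1$ equals $[T_1,T_2]$ since $\nabla$ is torsion-free, and the resulting $-\lambda[T_1,T_2]$ cancels exactly against $-\overline{\nabla}_{[T_1,T_2]}\mathcal{V}=+\lambda[T_1,T_2]$. What survives is $T_2(\lambda)T_1-T_1(\lambda)T_2$, and since $T_1,T_2$ are tangent to $\Sigma$ we have $T_i(\lambda)=\langle\nabla\lambda,T_i\rangle$ for the gradient of $\lambda$ with respect to the induced metric, which is the asserted identity.

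I do not expect a genuine obstacle here: the computation is short and formal. The only points requiring care are the bookkeeping of tangential versus normal components — in particular verifying that the normal parts genuinely cancel, which is exactly where umbilicity (rather than a general shape operator) is used — and the remark that $[T_1,T_2]$ remains tangent to $\Sigma$, so that the Weingarten relation $\overline{\nabla}_{[T_1,T_2]}\mathcal{V}=-\lambda[T_1,T_2]$ is legitimate.
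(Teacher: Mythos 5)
Your proof is correct and is essentially the paper's own argument: expand $\overline{\Riem}(T_1,T_2)\mathcal{V}$ by definition, substitute $\overline{\nabla}_{T_i}\mathcal{V}=-\lambda T_i$ from (\ref{TUeq}), and cancel the bracket terms by torsion-freeness. The only (harmless) extra step is your detour through the Gauss formula and the symmetry of $h$; torsion-freeness of the ambient connection $\overline{\nabla}$ already gives $\overline{\nabla}_{T_1}T_2-\overline{\nabla}_{T_2}T_1=[T_1,T_2]$, so the normal components never need to be separated out.
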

\begin{proof}
	The result follows  by straightforward calculation after noting 
	$$\overline{\Riem}(T_{1},T_{2})\mathcal{V} = \overline{\nabla}_{T_{1}}\overline{\nabla}_{T_{2}}\mathcal{V}-\overline{\nabla}_{T_{2}}\overline{\nabla}_{T_{1}}\mathcal{V}-\overline{\nabla}_{[T_{1},T_{2}]}\mathcal{V} = \overline{\nabla}_{T_{2}}(\lambda T_{1}) - \overline{\nabla}_{T_{1}}(\lambda T_{2}) +\lambda[T_{1},T_{2}],$$
	by Equation (\ref{TUeq}).  
\end{proof}
One can also compute $\overline{\Riem}(T_{1},T_{2})\mathcal{V}$ in terms of the frame $\{e_{i}\}$ by using Equations (\ref{Rm1}) and (\ref{Rm2}). This yields the following:
\begin{lemma} \label{GradLambda}
The following equations hold:
\begin{align}
\langle \nabla \lambda, e_{1} \rangle  = & \ \delta\frac{f_{xx}v_{1}(v_{2}-v_{3})^{2}}{4}, \label{dlamE1} \\
\langle \nabla \lambda, e_{2} \rangle  = & \ \delta\frac{f_{xx}(v_{3}-v_{2})}{4} ( v_{1}^2 + \varepsilon v_{3}(v_{2} - v_{3})), \label{dlamE2}\\
\langle \nabla \lambda, e_{3} \rangle  = & \ \delta\frac{f_{xx}(v_{2}-v_{3})}{4} \left( v_{1}^2 + \varepsilon v_{2}(v_{2} - v_{3})\right) \label{dlamE3}.
\end{align}
\end{lemma}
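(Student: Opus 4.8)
The plan is to start from the previous lemma, which expresses the ambient curvature term $\overline{\Riem}(T_1,T_2)\mathcal{V}$ entirely in terms of $\nabla\lambda$, and to match it against a second, independent computation of the same quantity using the explicit curvature formulas (\ref{Rm1}) and (\ref{Rm2}). First I would expand $T_1 = v_1 e_2 - \varepsilon v_2 e_1$ and $T_2 = v_1 e_3 + \varepsilon v_3 e_1$ and use multilinearity to write $\overline{\Riem}(T_1,T_2)\mathcal{V}$ as a sum of terms of the form (coefficient)$\cdot\overline{\Riem}(e_i,e_j)e_k$. Since $\overline{\Riem}(e_2,e_3)\cdot$ vanishes (it is not among the nonzero components listed), only the mixed terms involving $e_1$ survive, and each of these is proportional to $f_{xx}$; collecting them gives $\overline{\Riem}(T_1,T_2)\mathcal{V}$ as an explicit $f_{xx}$-multiple of a fixed combination of $e_1,e_2,e_3$.

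Next I would take the right-hand side from the previous lemma, $\langle\nabla\lambda,T_2\rangle T_1 - \langle\nabla\lambda,T_1\rangle T_2$, and rewrite the inner products: since the $T_i$ are tangent to $\Sigma$ and $\nabla\lambda$ is the gradient of $\lambda$ on $\Sigma$, we have $\langle\nabla\lambda,T_i\rangle = \langle\overline\nabla\lambda - (\text{normal part}),T_i\rangle$, but because $T_i\perp\mathcal{V}$ this is just the ambient inner product, which via the orthonormality relations $\langle e_1,e_1\rangle=\varepsilon$, $\langle e_2,e_2\rangle=1$, $\langle e_3,e_3\rangle=-1$ becomes a linear expression in $\langle\nabla\lambda,e_1\rangle,\langle\nabla\lambda,e_2\rangle,\langle\nabla\lambda,e_3\rangle$ and the $v_i$. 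Substituting the expansions of $T_1,T_2$ then expresses the whole right-hand side in the $\{e_i\}$ basis as well.

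Then I would equate the two expressions for $\overline{\Riem}(T_1,T_2)\mathcal{V}$ componentwise in the basis $\{e_1,e_2,e_3\}$ (legitimate since $\{e_i\}$ is a frame). This produces three scalar equations that are linear in the three unknowns $\langle\nabla\lambda,e_i\rangle$, with coefficients built from the $v_i$ (and their constraint $\varepsilon v_1^2 + v_2^2 - v_3^2 = \delta$) and an inhomogeneous term proportional to $\delta f_{xx}$. Because $\{T_1,T_2,\mathcal{V}\}$ is a basis, equivalently because $v_1\neq 0$ as arranged in the running assumption, this linear system is invertible; solving it — most cleanly by using the matrix $M$ above to change basis — yields the stated formulas (\ref{dlamE1})--(\ref{dlamE3}). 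I expect the main obstacle to be purely bookkeeping: correctly tracking signs coming from $\varepsilon$, from the indefinite inner products $\langle e_3,e_3\rangle=-1$, and from the antisymmetries of $\overline{\Riem}$, and then carrying out the final $3\times 3$ linear solve without algebra slips; there is no conceptual difficulty, only the risk of a miscalculation in simplifying the factor $(v_2-v_3)$ out of each component.
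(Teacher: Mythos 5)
Your overall strategy is exactly the paper's (the paper gives no written proof beyond ``compute $\overline{\Riem}(T_1,T_2)\mathcal{V}$ in the frame $\{e_i\}$''), and your expansion of the left-hand side is fine: multilinearity, $\overline{\Riem}(e_2,e_3)=0$, and (\ref{Rm1})--(\ref{Rm2}) give $\overline{\Riem}(T_1,T_2)\mathcal{V}=\tfrac{1}{4}\varepsilon f_{xx}v_1(v_2-v_3)\bigl(v_1(e_2+e_3)-\varepsilon(v_2-v_3)e_1\bigr)$. The gap is in your final step: the $3\times 3$ system obtained by equating $e_1,e_2,e_3$-components is \emph{not} invertible. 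Writing $a_i=\langle\nabla\lambda,e_i\rangle$ and cancelling a factor of $v_1\neq0$, the three component equations are
\begin{align*}
v_3 a_2 + v_2 a_3 &= \tfrac{\varepsilon}{4}f_{xx}(v_2-v_3)^2,\\
\varepsilon v_3 a_1 + v_1 a_3 &= \tfrac{\varepsilon}{4}f_{xx}v_1(v_2-v_3),\\
\varepsilon v_2 a_1 - v_1 a_2 &= \tfrac{\varepsilon}{4}f_{xx}v_1(v_2-v_3),
\end{align*}
whose coefficient determinant vanishes identically ($\varepsilon v_3(-v_1v_2)+v_1(\varepsilon v_2v_3)=0$). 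This is forced, not an accident of the algebra: both sides of the vector identity lie in the two-plane $T\Sigma=\mathcal{V}^{\perp}$ for \emph{any} choice of the $a_i$, so pairing the identity with $\mathcal{V}$ yields $0=0$ and the system has rank at most $2$; it only determines $\langle\nabla\lambda,T_1\rangle$ and $\langle\nabla\lambda,T_2\rangle$. The invertibility you invoke does not follow from $\{T_1,T_2,\mathcal{V}\}$ being a basis of the ambient space.

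The missing ingredient is the tangency of the gradient: $\nabla\lambda\in T\Sigma$, hence $\langle\nabla\lambda,\mathcal{V}\rangle=v_1a_1+v_2a_2+v_3a_3=0$. Adjoining this third equation and using the normalisation $\varepsilon v_1^2+v_2^2-v_3^2=\delta$ (through the identity $v_1^2+\varepsilon(v_2^2-v_3^2)=\varepsilon\delta$) makes the system uniquely solvable: eliminating $a_2,a_3$ gives $a_1\cdot\varepsilon\delta/v_1=\tfrac{\varepsilon}{4}f_{xx}(v_2-v_3)^2$, which is (\ref{dlamE1}), and back-substitution gives (\ref{dlamE2}) and (\ref{dlamE3}) after rewriting with the normalisation. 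Equivalently, and perhaps more cleanly, solve the two-dimensional equation for $\nabla\lambda=\alpha T_1+\beta T_2$ and then convert to the $e_i$-components using the matrix $M$. Either way, the step your write-up omits --- using that $\nabla\lambda$ is tangent --- is essential; without it the formulas (\ref{dlamE1})--(\ref{dlamE3}) are not determined by the curvature identity alone.
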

This lemma allows us to recover some known results. In the case that $f_{xx}=0$ on some open neighbourhood $\Omega\subset N$, the curvature Equations (\ref{Rm1}) and (\ref{Rm2}) vanish and so we have a flat Walker metric (we will assume from here that the manifold $N$ is connected and so cannot have flat and non-flat regions). Lemma \ref{GradLambda} shows that a totally umbilical surface embedded in a flat Walker three-manifold must have constant $\lambda$ and so be a parallel surface. Such surfaces are classified (see for example the paper of Chen and Van der Veken \cite{ChV}).  If the ambient metric is not flat but $\lambda$ is constant and so the surface is parallel, we recover  part of the key lemma (Lemma 3 in \cite{CV}) of Calvaruso--Van der Veken that a parallel surface of $N$ must have normal vector with $v_{1}=0$ or $v_{2}=v_{3}$. 
\subsection{First calculation of the Lie bracket $[e_{1}^{T},e_{2}^{T}](\lambda)$ }
Given a vector field $\eta\in \Gamma (TN)$ we can compute the tangential projection of its restriction to $\Sigma$, which we will denote $\eta^{T}$, as
$$\eta^{T} = \eta-\delta\langle \eta,\mathcal{V}\rangle\mathcal{V}.$$
Hence for the frame field $\{e_{i}\}$ we obtain
$$\langle e_{i}^{T},e_{j}^{T}\rangle = \varepsilon_{i}\delta_{ij}-\delta\varepsilon_{i}\varepsilon_{j}v_{i}v_{j},$$
where $\varepsilon_{1}=\varepsilon$, $\varepsilon_{2}=1 $, and $\varepsilon_{3}=-1$.
We compute
$$[e_{1}^{T},e_{2}^{T}](\lambda) = \langle \nabla_{e_{1}^{T}}e_{2}^{T}-\nabla_{e_{2}^{T}}e_{1}^{T},\nabla \lambda\rangle.$$

\begin{lemma}
	The derivative $[e_{1}^{T},e_{2}^{T}](\lambda)$ is given by
\begin{equation}\label{firliebrac}
[e_{1}^{T},e_{2}^{T}](\lambda) =\frac{\delta(v_{2}-v_{3})f_{xx}}{4}\left(\frac{\varepsilon f_{x}(v_{2}-v_{3})^{2}}{4}(\delta(\varepsilon v_{1}^{2}+v_{2}^{2}-v_{2}v_{3})-1)+\lambda v_{1}\right)
\end{equation}
\end{lemma}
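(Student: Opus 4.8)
The plan is to evaluate the bracket directly from its definition, $[e_{1}^{T},e_{2}^{T}](\lambda)=\langle \overline{\nabla}_{e_{1}^{T}}e_{2}^{T}-\overline{\nabla}_{e_{2}^{T}}e_{1}^{T},\nabla\lambda\rangle$. The first observation I would make is that one may replace the intrinsic connection $\nabla$ by the ambient connection $\overline{\nabla}$ on the right-hand side: since $\nabla\lambda$ is tangent to $\Sigma$, the normal components of $\overline{\nabla}_{e_{i}^{T}}e_{j}^{T}$ contribute nothing to the pairing, so the bracket equals $\langle\overline{\nabla}_{e_{1}^{T}}e_{2}^{T}-\overline{\nabla}_{e_{2}^{T}}e_{1}^{T},\nabla\lambda\rangle$ computed with $\overline{\nabla}$. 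Next I would record, from $\langle e_{1},\mathcal{V}\rangle=\varepsilon v_{1}$, $\langle e_{2},\mathcal{V}\rangle=v_{2}$, $\langle e_{3},\mathcal{V}\rangle=-v_{3}$, the tangential projections $e_{1}^{T}=e_{1}-\delta\varepsilon v_{1}\mathcal{V}$, $e_{2}^{T}=e_{2}-\delta v_{2}\mathcal{V}$, $e_{3}^{T}=e_{3}+\delta v_{3}\mathcal{V}$.

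Expanding $\overline{\nabla}_{e_{1}^{T}}e_{2}^{T}$ and $\overline{\nabla}_{e_{2}^{T}}e_{1}^{T}$ by $C^{\infty}$-bilinearity produces three kinds of terms. Terms of the shape $(e_{i}^{T}v_{j})\mathcal{V}$, coming from differentiating the coefficient functions $v_{j}$, are pure multiples of $\mathcal{V}$ and hence drop out of the pairing with $\nabla\lambda$ — this is the key simplification, and it is what keeps unknown derivatives of the angle functions out of the final formula. Terms of the shape $\overline{\nabla}_{e_{i}^{T}}\mathcal{V}$ are handled by the umbilic equation (\ref{TUeq}), which gives $\overline{\nabla}_{e_{i}^{T}}\mathcal{V}=-\lambda e_{i}^{T}$; using $\langle e_{i}^{T},\nabla\lambda\rangle=\langle e_{i},\nabla\lambda\rangle$ these contribute exactly $\delta\lambda\bigl(v_{2}\langle\nabla\lambda,e_{1}\rangle-\varepsilon v_{1}\langle\nabla\lambda,e_{2}\rangle\bigr)$. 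The remaining ``connection'' terms $\overline{\nabla}_{e_{1}^{T}}e_{2}$ and $\overline{\nabla}_{e_{2}^{T}}e_{1}$ are computed from (\ref{LCcon1})–(\ref{LCcon2}) together with $\overline{\nabla}_{e_{1}}e_{j}=0$; one finds $\overline{\nabla}_{e_{1}^{T}}e_{2}=\tfrac{1}{4}\delta v_{1}f_{x}(v_{2}-v_{3})e_{1}$ and $\overline{\nabla}_{e_{2}^{T}}e_{1}=\tfrac{1}{4}f_{x}\bigl(1-\delta v_{2}(v_{2}-v_{3})\bigr)(e_{2}+e_{3})$.

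It then remains to substitute the expressions for $\langle\nabla\lambda,e_{1}\rangle$, $\langle\nabla\lambda,e_{2}\rangle$, $\langle\nabla\lambda,e_{3}\rangle$ from Lemma \ref{GradLambda} and collect. In the $\lambda$-term, the factor $v_{2}\langle\nabla\lambda,e_{1}\rangle-\varepsilon v_{1}\langle\nabla\lambda,e_{2}\rangle$ equals $\tfrac{1}{4}\delta f_{xx}v_{1}(v_{2}-v_{3})(\varepsilon v_{1}^{2}+v_{2}^{2}-v_{3}^{2})$, and the normalization $\varepsilon v_{1}^{2}+v_{2}^{2}-v_{3}^{2}=\delta$ collapses it to precisely the $\lambda v_{1}$ contribution in (\ref{firliebrac}). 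In the connection difference one needs $\langle\nabla\lambda,e_{2}\rangle+\langle\nabla\lambda,e_{3}\rangle=\tfrac{1}{4}\delta\varepsilon f_{xx}(v_{2}-v_{3})^{3}$, and after factoring out $\tfrac{1}{16}f_{x}f_{xx}(v_{2}-v_{3})^{3}$ the bracket that remains is $v_{1}^{2}-\delta\varepsilon+\varepsilon v_{2}(v_{2}-v_{3})$, which the identities $\delta^{2}=\varepsilon^{2}=1$ rewrite as $\delta\varepsilon\bigl(\delta(\varepsilon v_{1}^{2}+v_{2}^{2}-v_{2}v_{3})-1\bigr)$ — exactly the first term of (\ref{firliebrac}). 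There is no conceptual obstacle here; the only real care needed is (i) checking that all coefficient-derivative terms really are normal and so invisible to $\nabla\lambda$, so that no stray $e_{i}^{T}v_{j}$ survives, and (ii) applying the normalization $\varepsilon v_{1}^{2}+v_{2}^{2}-v_{3}^{2}=\delta$ in the right place, since the connection part of the calculation does not use it while the $\lambda$ part does. Organising the computation around the three term-types above keeps the bookkeeping under control.
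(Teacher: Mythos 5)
Your proposal is correct and follows essentially the same route as the paper: the paper's projection formula $\nabla_{e_{i}^{T}}e_{j}^{T}=\bigl(\sum_{k}\varepsilon_{k}\langle e_{i}^{T},e_{k}^{T}\rangle(\overline{\nabla}_{e_{k}}e_{j})^{T}\bigr)+\delta\langle\mathcal{V},e_{j}\rangle\lambda e_{i}^{T}$ is exactly your three-term decomposition (connection terms, umbilic terms via (\ref{TUeq}), and coefficient-derivative terms killed because they are multiples of $\mathcal{V}$), and your intermediate expressions for $\overline{\nabla}_{e_{1}^{T}}e_{2}$ and $\overline{\nabla}_{e_{2}^{T}}e_{1}$ agree with the paper's displayed covariant derivatives. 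The only cosmetic difference is that you defer the tangential projection by pairing directly with the tangent vector $\nabla\lambda$, which changes nothing.
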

\begin{proof}
	The covariant derivative $\nabla_{e_{i}^{T}}e_{j}^{T}$ can be computed via the formula
\begin{equation*}
\nabla_{e_{i}^{T}}e_{j}^{T} = \nabla_{e_{i}^{T}}(e_{j}-\delta\langle \mathcal{V},e_{j}\rangle \mathcal{V})=(\overline{\nabla}_{e_{i}^{T}}e_{j}-\delta\overline{\nabla}_{e_{i}^{T}}\langle \mathcal{V},e_{j}\rangle\mathcal{V}-\delta\langle \mathcal{V},e_{j}\rangle\overline{\nabla}_{e_{i}^{T}}\mathcal{V})^{T}.
\end{equation*}
Using Equation (\ref{TUeq}) and the fact that $\mathcal{V}^{T}=0$ this can be rewritten as
$$\nabla_{e_{i}^{T}}e_{j}^{T} =  \left(\sum_{k=1}^{k=3}\varepsilon_{k}\langle e_{i}^{T},e_{k}^{T}\rangle (\overline{\nabla}_{e_{k}}e_{j})^{T}\right)+\delta \langle\mathcal{V},e_{j}\rangle\lambda e_{i}^{T}.$$
Straightforward calculation using Equations (\ref{LCcon1}) and (\ref{LCcon2}) yields
\begin{equation*}
\nabla_{e_{1}^{T}}e_{2}^{T} =  \delta\left(\lambda v_{2} + \frac{v_{1}(v_{2}-v_{3})f_{x}}{4} \right)e_{1}^{T},
\end{equation*}
and
\begin{equation*}
\nabla_{e_{2}^{T}}e_{1}^{T} =\left(  \frac{(\delta v_{2}(v_{3}-v_{2}) +1)f_{x}}{4} + \delta\varepsilon\lambda v_{1} \right)e_{2}^{T} +\left( \frac{(\delta v_{2}(v_{3}-v_{2})+1)f_{x}}{4}\right)e_{3}^{T}.
\end{equation*}
The result now follows from using Equations (\ref{dlamE1})-(\ref{dlamE3}).
\end{proof}

\subsection{Second calculation of the Lie bracket}
We note the following Lemma for calculating the derivative of the functions $v_{i}$ appearing as the coefficients in the normal vector $\mathcal{V}$.
\begin{lemma}
\begin{equation} \label{InviEj}
\langle \nabla v_{i},e_{j}^{T}\rangle =\varepsilon_{i} \left(\sum_{k=1}^{k=3}\varepsilon_{k}\langle e_{j}^{T},e_{k}^{T}\rangle\langle \overline{\nabla}_{e_{k}}e_{i},\mathcal{V}\rangle\right)-\varepsilon_{i}\lambda\langle e_{i}^{T},e_{j}^{T}\rangle.
\end{equation}
\end{lemma}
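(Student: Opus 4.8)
The plan is to start from the elementary observation that, because $\mathcal{V}=\sum_{k}v_{k}e_{k}$ and $\{e_{k}\}$ is pseudo-orthonormal with $\langle e_{k},e_{k}\rangle=\varepsilon_{k}$, the angle functions are recovered as $v_{i}=\varepsilon_{i}\langle \mathcal{V},e_{i}\rangle$. Differentiating this relation along a tangent vector field $e_{j}^{T}\in T\Sigma$ and using that the ambient connection $\overline{\nabla}$ is compatible with $g_{f}^{\varepsilon}$ gives
$$\langle \nabla v_{i},e_{j}^{T}\rangle = e_{j}^{T}(v_{i}) = \varepsilon_{i}\langle \overline{\nabla}_{e_{j}^{T}}\mathcal{V},e_{i}\rangle + \varepsilon_{i}\langle \mathcal{V},\overline{\nabla}_{e_{j}^{T}}e_{i}\rangle ,$$
so that the lemma is reduced to identifying the two terms on the right-hand side.

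For the first term I would use the totally umbilical condition in its shape-operator form (\ref{TUeq}): since $e_{j}^{T}$ is tangent to $\Sigma$ we have $\overline{\nabla}_{e_{j}^{T}}\mathcal{V}=-S(e_{j}^{T})=-\lambda e_{j}^{T}$, whence $\langle \overline{\nabla}_{e_{j}^{T}}\mathcal{V},e_{i}\rangle=-\lambda\langle e_{j}^{T},e_{i}\rangle=-\lambda\langle e_{i}^{T},e_{j}^{T}\rangle$, the last equality holding because the normal component of $e_{i}$ is $g_{f}^{\varepsilon}$-orthogonal to the tangent vector $e_{j}^{T}$; multiplying by $\varepsilon_{i}$ produces the term $-\varepsilon_{i}\lambda\langle e_{i}^{T},e_{j}^{T}\rangle$. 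For the second term the key is to re-expand $e_{j}^{T}$ in the ambient frame: pseudo-orthonormality gives $W=\sum_{k}\varepsilon_{k}\langle W,e_{k}\rangle e_{k}$ for any $W$, and applying this to $W=e_{j}^{T}$ together with $\langle e_{j}^{T},e_{k}\rangle=\langle e_{j}^{T},e_{k}^{T}\rangle$ yields $e_{j}^{T}=\sum_{k}\varepsilon_{k}\langle e_{j}^{T},e_{k}^{T}\rangle e_{k}$. Hence $\overline{\nabla}_{e_{j}^{T}}e_{i}=\sum_{k}\varepsilon_{k}\langle e_{j}^{T},e_{k}^{T}\rangle\overline{\nabla}_{e_{k}}e_{i}$, and pairing with $\mathcal{V}$ and multiplying by $\varepsilon_{i}$ gives exactly $\varepsilon_{i}\sum_{k}\varepsilon_{k}\langle e_{j}^{T},e_{k}^{T}\rangle\langle \overline{\nabla}_{e_{k}}e_{i},\mathcal{V}\rangle$, which is the claimed expression.

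There is no genuine obstacle here: the statement is a pointwise identity valid on any non-degenerate totally umbilical surface, and its proof is essentially bookkeeping. The only points that require care are (i) performing the differentiation with the ambient connection $\overline{\nabla}$ rather than the induced connection $\nabla$, so that metric compatibility may be applied before any tangential projection is taken, and (ii) tracking the signs $\varepsilon_{i}=\langle e_{i},e_{i}\rangle$ throughout, since in the pseudo-Riemannian setting recovering the components of a vector in the frame $\{e_{k}\}$ is not simply taking inner products. With this lemma in place, one feeds the connection formulae (\ref{LCcon1})--(\ref{LCcon2}) into $\langle \nabla v_{i},e_{j}^{T}\rangle$, combines the result with Lemma \ref{GradLambda}, and obtains the second expression for $[e_{1}^{T},e_{2}^{T}](\lambda)$; comparison with (\ref{firliebrac}) then yields the integrability constraints on $f$ and the $v_{i}$ that drive the proof of Theorem \ref{T1}.
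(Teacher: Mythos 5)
Your proof is correct and follows exactly the paper's route: write $v_{i}=\varepsilon_{i}\langle\mathcal{V},e_{i}\rangle$, differentiate along $e_{j}^{T}$ using metric compatibility of $\overline{\nabla}$, then handle the first term via the shape-operator form (\ref{TUeq}) of the umbilical condition and the second by expanding $e_{j}^{T}$ in the pseudo-orthonormal frame. The paper's own proof is just a terser version of the same bookkeeping.
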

\begin{proof}
By definition,
$$ \langle \nabla v_{i},e_{j}^{T}\rangle =e_{j}^{T}(\varepsilon_{i}\langle\mathcal{V},e_{i}\rangle) = \varepsilon_{i} \langle \overline{\nabla}_{e_{j}^{T}}\mathcal{V},e_{i}\rangle+  \varepsilon_{i} \langle \mathcal{V},\overline{\nabla}_{e_{j}^{T}} e_{i}\rangle. $$
The result follows from Equation (\ref{TUeq}) and expanding $e_{j}^{T}$ in the $\{e_{k}\}$ frame.
\end{proof}
We make a second calculation of the quantity $[e_{1}^{T},e_{2}^{T}](\lambda)$
\begin{lemma}
\begin{equation}\label{secliebrac}
\begin{split}
[e_{1}^{T},e_{2}^{T}](\lambda) = \\
& \frac{\delta(v_{2}-v_{3})f_{xx}}{4}\left(\frac{\varepsilon f_{x}(v_{2}-v_{3})^{2}}{4}(\delta(\varepsilon v_{1}^{2}+v_{2}^{2}-v_{2}v_{3})-1)+\lambda v_{1}\right)-\\
& \frac{\delta(v_{2}-v_{3})}{4}\left(\varepsilon v_{3}(v_{2}-v_{3})f_{xxx}\right).
\end{split}
\end{equation}
\end{lemma}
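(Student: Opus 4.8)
The plan is to evaluate $[e_{1}^{T},e_{2}^{T}](\lambda)$ directly, as $e_{1}^{T}(e_{2}^{T}(\lambda))-e_{2}^{T}(e_{1}^{T}(\lambda))$. Since $\nabla\lambda$ is tangent to $\Sigma$ one has $e_{i}^{T}(\lambda)=\langle\nabla\lambda,e_{i}^{T}\rangle=\langle\nabla\lambda,e_{i}\rangle$, so Equations (\ref{dlamE1})--(\ref{dlamE3}) already give $e_{1}^{T}(\lambda)$ and $e_{2}^{T}(\lambda)$ explicitly, each as $\tfrac{\delta}{4}f_{xx}$ times a polynomial in $v_{1},v_{2},v_{3}$. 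The remaining work is to differentiate these two expressions once more, which is an exercise in the product rule.

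Two differentiation rules are needed. First, the derivatives of the angle functions along $\Sigma$ are supplied by Equation (\ref{InviEj}); expanding its right-hand side with (\ref{LCcon1})--(\ref{LCcon2}) shows that each $e_{j}^{T}(v_{i})$ is a multiple of $f_{x}$ plus a multiple of $\lambda$, the coefficients being polynomials in the $v_{k}$ (in particular no new derivatives of $\lambda$ are produced, since the formulas of Lemma \ref{GradLambda} contain no explicit $\lambda$). Second, for the factor $f_{xx}$ I would write $e_{i}^{T}=e_{i}-\delta\langle e_{i},\mathcal{V}\rangle\mathcal{V}$ and use that $f=f(x,y)$: from the coordinate form of the frame, $e_{1}(f_{xx})=f_{xxx}$ and $e_{2}(f_{xx})=-e_{3}(f_{xx})=\tfrac{1}{\sqrt{2}}f_{xxy}$, whence $\mathcal{V}(f_{xx})=v_{1}f_{xxx}+\tfrac{v_{2}-v_{3}}{\sqrt{2}}f_{xxy}$, and therefore $e_{1}^{T}(f_{xx})$ and $e_{2}^{T}(f_{xx})$ are explicit combinations of $f_{xxx}$ and $f_{xxy}$.

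Carrying out the differentiation and collecting terms leaves four kinds of monomial: $f_{x}f_{xx}$, $\lambda f_{xx}$, $f_{xxx}$ and $f_{xxy}$. The $f_{xxy}$-coefficient turns out to be proportional to $\varepsilon v_{1}^{2}+v_{2}^{2}-v_{3}^{2}-\delta$, which vanishes identically because $\mathcal{V}$ is a $\delta$-normal vector; this cancellation is exactly what is needed, as (\ref{secliebrac}) contains no $f_{xxy}$. Using the same identity $\varepsilon v_{1}^{2}=\delta-(v_{2}^{2}-v_{3}^{2})$ to absorb the redundant powers of $v_{1}$, the $f_{x}f_{xx}$- and $\lambda f_{xx}$-terms collapse to precisely the right-hand side of (\ref{firliebrac}), while the $f_{xxx}$-coefficient collapses to $-\tfrac{\delta\varepsilon v_{3}(v_{2}-v_{3})^{2}}{4}$; together these give (\ref{secliebrac}). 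Comparing this with the first calculation, the surviving $f_{xxx}$-term is the source of the integrability condition $f_{xxx}v_{3}(v_{2}-v_{3})^{2}=0$ that will drive the rest of the argument.

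I expect the main obstacle to be organisational rather than conceptual: the raw expansion has a large number of monomials in $v_{1},v_{2},v_{3}$, and both the disappearance of the $f_{xxy}$-terms and the simplifications of the $f_{xxx}$- and lower-order coefficients hinge on repeated, not-quite-obvious applications of the normalisation $\varepsilon v_{1}^{2}+v_{2}^{2}-v_{3}^{2}=\delta$. A secondary point demanding care is the computation of $e_{i}^{T}(f_{xx})$: because $e_{2}$ and $e_{3}$ (and hence $\mathcal{V}$) carry nonzero $\partial_{y}$-components, differentiating $f_{xx}$ along $\Sigma$ genuinely involves $f_{xxy}$, even though no such term survives in the final formula.
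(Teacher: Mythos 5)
Your proposal is correct and follows essentially the same route as the paper's proof: apply the chain rule to the explicit expressions of Lemma \ref{GradLambda}, using Equation (\ref{InviEj}) for the tangential derivatives of the $v_{k}$ and the coordinate form of the frame to evaluate $e_{i}^{T}(f_{xx})$. The one thing you add is to make explicit the cancellation of the $f_{xxy}$-terms via the normalisation $\varepsilon v_{1}^{2}+v_{2}^{2}-v_{3}^{2}=\delta$ (a point the paper subsumes under ``a lengthy simplification''), and your stated coefficients --- in particular $-\tfrac{\delta\varepsilon v_{3}(v_{2}-v_{3})^{2}}{4}$ for $f_{xxx}$ --- check out.
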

\begin{proof}
We compute (noting the slight abuse of notation when using the chain rule)
$$e_{i}^{T}(e_{j}^{T}(\lambda)) = e_{i}^{T}(\langle \nabla \lambda, e^{T}_{j} \rangle)=\sum_{k=1}^{3}\frac{\partial \langle \nabla \lambda, e_{j} \rangle}{\partial v_{k}}\langle \nabla v_{k},e_{i}^{T} \rangle + \frac{\partial \langle \nabla \lambda, e_{j} \rangle}{\partial f_{xx}}\left(e_{i}^{T}(f_{xx})\right).$$ 
Using Equations (\ref{dlamE1})-(\ref{dlamE3}) and (\ref{InviEj}) we can thus compute
$$  [ e_{1}^{T},e_{2}^{T}](\lambda) = e_{1}^{T}(e_{2}^{T}(\lambda))-e_{2}^{T}(e_{1}^{T}(\lambda)).$$
A lengthy simplification yields the result.
\end{proof}
Subtracting Equation (\ref{firliebrac}) from Equation (\ref{secliebrac})  gives the main technical tool for proving Theorem \ref{T1}.
\begin{proposition}
Let $\Sigma\subset (N,g_{f}^{\varepsilon})$ be a totally umbilical surface with $\delta$-normal vector 
$$\mathcal{V} = v_{1}e_{1}+v_{2}e_{2}+v_{3}e_{3}.$$
Then 
$$
v_{3}(v_{2}-v_{3})^{2}f_{xxx}=0.
$$
Hence at each point in $\Sigma$, there is a small neighbourhood in which one or more of
$$f_{xxx}=0, \qquad v_{3}=0, \qquad v_{2}=v_{3},$$
must occur.
\end{proposition}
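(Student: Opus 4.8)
The plan is to obtain the proposition by simply comparing the two independent computations of $[e_1^T, e_2^T](\lambda)$ established in the preceding lemmas, namely Equation (\ref{firliebrac}) and Equation (\ref{secliebrac}). Since both expressions equal the same quantity, their difference must vanish identically on $\Sigma$. Subtracting (\ref{firliebrac}) from (\ref{secliebrac}), the long first bracket (containing $f_x$, $\lambda$, and the $v_i$) cancels exactly, leaving only the remaining term
\[
-\frac{\delta (v_2 - v_3)}{4}\bigl(\varepsilon v_3 (v_2 - v_3) f_{xxx}\bigr) = 0.
\]
Since $\delta = \pm 1$ and $\varepsilon = \pm 1$ are nonzero constants, this is equivalent to $v_3 (v_2 - v_3)^2 f_{xxx} = 0$ on $\Sigma$, which is the stated identity.

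For the second assertion, I would argue pointwise. Fix $p \in \Sigma$ and suppose, for contradiction, that none of the three conditions $f_{xxx} = 0$, $v_3 = 0$, $v_2 = v_3$ holds on any neighbourhood of $p$. Each of $f_{xxx}$, $v_3$, and $v_2 - v_3$ is a smooth function on $\Sigma$; if $f_{xxx}(p) \neq 0$, $v_3(p) \neq 0$, and $(v_2 - v_3)(p) \neq 0$, then by continuity all three are nonzero on a common neighbourhood of $p$, contradicting the identity $v_3(v_2-v_3)^2 f_{xxx} = 0$. So at least one of the three functions vanishes at $p$. It remains to upgrade "vanishes at $p$" to "vanishes on a neighbourhood": this follows by noting that the zero set of the product $v_3 (v_2 - v_3)^2 f_{xxx}$ is all of $\Sigma$, so $\Sigma$ is covered by the three closed sets $\{v_3 = 0\}$, $\{v_2 = v_3\}$, $\{f_{xxx} = 0\}$; at any point in the interior of one of these sets (and such interiors cover a dense open subset), the corresponding condition holds on a neighbourhood, while on the nowhere-dense remainder one can pass to a nearby point — in practice one simply works on the open subsets where each factor is non-vanishing, exactly as in the $v_1$ dichotomy used earlier in the paper.

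The only genuine content here is the cancellation of the first bracket when the two Lie-bracket formulas are subtracted, and that is guaranteed by construction: both $f_x\lambda$-brackets in (\ref{firliebrac}) and (\ref{secliebrac}) are literally the same expression, so the subtraction is immediate and requires no computation. Consequently \textbf{there is no real obstacle} in this proposition; the difficult calculations were all discharged in establishing Lemmas leading to (\ref{firliebrac}) and (\ref{secliebrac}). The one point deserving a word of care is the logical structure of the "at each point, one or more of \ldots occurs" conclusion: since the three alternatives are not mutually exclusive and their validity is only local, the cleanest formulation — and the one that matches how it will be used in the sequel — is the trichotomy of open sets $\{f_{xxx} \neq 0 \text{ is impossible}\}$, etc., so that the subsequent case analysis in the proof of Theorem \ref{T1} can proceed on each piece separately.
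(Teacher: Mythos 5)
Your proposal is correct and is exactly the paper's argument: the proposition is obtained by subtracting Equation (\ref{firliebrac}) from Equation (\ref{secliebrac}), whereupon the common bracket cancels and only the $f_{xxx}$ term survives, giving $v_{3}(v_{2}-v_{3})^{2}f_{xxx}=0$ after discarding the nonzero factors $\delta$ and $\varepsilon$. Your remark on the local nature of the trichotomy (working on the dense open set where one factor is non-vanishing on a neighbourhood) is a fair reading of how the statement is used, and matches the paper's treatment of the $v_{1}$ dichotomy.
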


\subsection{Deducing Theorem \ref{T1}}
Suppose $v_{3}=0$ (and of course $v_{1}\neq 0$). Then the vectors 
$$T_{1} = v_{1}e_{2}-\varepsilon v_{2}e_{1} \quad  \textrm{ and } \quad T_{2} = v_{1}e_{3}$$
are a non-degenerate tangent frame to the surface $\Sigma$ and the vector 
$$\mathcal{V}=v_{1}e_{1}+v_{2}e_{2},$$
is a $\delta$-normal vector. As $\Sigma$ is totally umbilical, we have
$$-\overline{\nabla}_{T_{1}}\mathcal{V} = \lambda T_{1}.$$
However
$$\langle\overline{\nabla}_{T_{1}}\mathcal{V},e_{3}\rangle =- v_{1}^{2}\frac{f_{x}}{4} .$$
Hence $f_{x}=0$ which would yield a flat ambient manifold.\\
\\
Suppose $v_{2}=v_{3}$ then  $\lambda$ is constant by Equations (\ref{dlamE1})-(\ref{dlamE3}).  This would mean $M$ is a parallel surface with $v_{2}=v_{3}$. These surfaces are classified by Calvaruso--Van der Veken \cite{CV}. In this case the normal vector $\mathcal{V}$ can be written as
$$\mathcal{V} = \eta\partial_{x} +\sqrt{2}v_{2}\partial_{t},$$
where $\eta=\pm1$ depending upon the values of $\varepsilon$ and $\delta$.
The surfaces can be described explicitly in local coordinates,
$$\iota(u,v)=(u,x(v),v),$$ 
where $x(v)$ satisfies the equations
$$ x'(v)=-\eta\varepsilon\sqrt{2}v_{2}(v),$$
and
$$x''(v)-\frac{\varepsilon}{2}f_{x}(x(v),v)=C,$$
for some constant $C$.\\
\\
If $f_{xxx}=0$ then clearly 
$$f(x,y) = A(y)x^{2}+B(y)x+C(y).$$
In \cite{CGRVA} it is demonstrated that this condition completely characterises locally conformally flat Walker three-manifolds. Hence Theorem A is demonstrated.

\bibliography{TUwalk}

\begin{thebibliography}{1}

\bibitem{BH17}
{\sc Batat, W., and Hall, S.~J.}
\newblock Classifying totally umbilical surfaces in three-dimensional
  homogeneous pseudo-{R}iemannian manifolds.
\newblock {\em (in preparation)\/} (2017).

\bibitem{CV}
{\sc Calvaruso, G., and Van~der Veken, J.}
\newblock Parallel surfaces in {L}orentzian three-manifolds admitting a
  parallel null vector field.
\newblock {\em J. Phys. A: Math. Theor. 43\/} (2010), 325207.

\bibitem{CGRVA}
{\sc Chaichi, M., Garc\'ia-R\'io, E., and V\'azquez-Abal, M.}
\newblock Three-dimensional {L}orentz manifolds admitting a parallel null
  vector field.
\newblock {\em J. Phys. A: Math. Gen. 38\/} (2005), 841--50.

\bibitem{ChV}
{\sc Chen, B.-Y., and Van~der Veken, J.}
\newblock Complete classification of parallel surfaces in 4-dimensional
  {L}orentzian space forms.
\newblock {\em Tohoku Math. J. (2) 61}, 1 (2009), 1--40.

\bibitem{MS}
{\sc Manzano, J.~M., and Souam, R.}
\newblock The classification of totally umbilical surfaces in homogeneous
  3-manifolds.
\newblock {\em Math. Z. 279}, (1-2) (2015), 557--576.

\bibitem{P}
{\sc Perlick, V.}
\newblock On totally umbilic submanifolds of semi-{R}iemannian manifolds.
\newblock {\em Nonlinear Anal. 63}, 5-7 (2005), 511--518.

\bibitem{ST}
{\sc Souam, R., and Toubiana, E.}
\newblock Totally umbilic surfaces in homogeneous 3-manifolds.
\newblock {\em Comment. Math. Helv. 84}, 3 (2009), 673--704.

\end{thebibliography}
\bibliographystyle{acm}

\end{document}